\date{}
\def\dim{\operatorname{dim}}
\def\Aut{\operatorname{Aut}}
\theoremstyle{plain}
\newtheorem{theorem} {Theorem} [section]
\newtheorem{proposition}[theorem]{Proposition}
\newtheorem{conjecture}{Conjecture}
\theoremstyle{definition}
\newtheorem{remark}[theorem] {Remark}
\begin{document}

 \title {Simplicity of tangent bundles of smooth horospherical varieties of Picard number one}
 \author [Jaehyun Hong]{Jaehyun Hong}
 \address{Center for Complex Geometry,  Institute for Basic Science (IBS), Daejeon 34126, Republic of Korea}
   \email{jhhong00@ibs.re.kr}

 \maketitle

\noindent{\small {\bf Abstract.}
Recently, Kanemitsu has discovered a counterexample to the long-standing conjecture that the tangent bundle of a Fano manifold of Picard number one is (semi)stable. His counterexample is a smooth horospherical variety. There is a weaker conjecture that the tangent bundle of a Fano manifold of Picard number one is simple.

We prove that  this weaker conjecture is valid for  smooth horospherical varieties of Picard number one.  Our proof follows from the existence of an irreducible family of unbendable rational curves  whose  tangent vectors span the tangent spaces of the horospherical variety at general points.} \\

  \noindent {\small {\bf R\'esum\'e.}
R\'ecemment, Kanemitsu a d\'ecouvert un contre-exemple \`a la conjecture de longue date selon laquelle le faisceau tangent d'une variété de Fano de nombre de Picard   un est (semi) stable. Son contre-exemple est une vari\'et\'e horosph\'erique lisse. Une conjecture plus faible affirme que le fibr\'e tangent d'une vari\'et\'e de Fano de nombre de Picard un est simple. 

Nous prouvons que cette conjecture plus faible est valable pour les variétés horosph\'eriques lisses de nombre de Picard   un. Notre preuve d\'ecoule de l'existence d'une famille irr\'eductible de courbes rationnelles inflexibles dont les vecteurs tangents engendrent les espaces tangents de la vari\'et\'e horosph\'erique en des points g\'en\'eraux.}

 \section{Introduction}

Tangent bundles of Fano manifolds have been studied from various   points of view,  in relation with the existence of a K\"ahler-Einstein metric.  For example,  if  $X$ admits  a K\"ahler-Einstein metric, then the Lie algebra of holomorphic vector fields on $X$ is reductive.
 A weaker condition, the existence of a Hermitian-Einstein metric on the tangent bundle $TX$ is equivalent to the polystability of $TX$, and is equivalent to the stability of $TX$ when $X$ has Picard number one.

 In this regard, when $X$ has Picard number one, the tangent bundle $TX$ of a Fano manifold $X$  has been expected to be semistable for a long time.

 \begin{conjecture} [Conjecture 0.1 of \cite{Ka20}]
 The tangent bundle of a Fano manifold of Picard number one is semistable.
 \end{conjecture}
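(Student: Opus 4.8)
The plan is to work with the unique polarization available: since $X$ has Picard number one, $\mathrm{Pic}(X)=\mathbb{Z}H$ for an ample generator $H$, and (semi)stability of $TX$ is measured against $H$. Writing $-K_X=r_X H$ for the Fano index $r_X>0$ and $n=\dim X$, the slope is $\mu(TX)=\frac{r_X}{n}(H^n)>0$, so any destabilizing subsheaf must itself have strictly positive slope. First I would carry out the standard reduction to foliations: if $TX$ fails to be semistable, pass to the maximal destabilizing subsheaf $\mathcal{F}\subset TX$, which is saturated; by the classical argument that the induced $\mathcal{O}_X$-linear bracket $\wedge^2\mathcal{F}\to TX/\mathcal{F}$ would otherwise destabilize the quotient and contradict maximality, $\mathcal{F}$ is closed under the Lie bracket. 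Thus a destabilizer would be a foliation $\mathcal{F}\subsetneq TX$ with $\mu(\mathcal{F})>\mu(TX)>0$.

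The second step is to exploit positivity along rational curves. A Fano manifold is covered by minimal rational curves, and for a general member with normalization $f\colon\mathbb{P}^1\to X$ one has $f^*TX\cong\mathcal{O}(2)\oplus\mathcal{O}(1)^{\oplus p}\oplus\mathcal{O}^{\oplus(n-1-p)}$, so $TX$ is positive exactly along the directions recorded by the variety of minimal rational tangents. A foliation of positive slope restricts with nonnegative degree to the general member of a covering family, and by the algebraicity criteria for foliations (Bogomolov--McQuillan, with the refinements of Kebekus--Sol\'a Conde--Toma and Campana--P\u{a}un) its leaves through general points are then algebraic and rationally connected. I would then try to recognize this rationally connected foliation geometrically, using the minimal rational curves and their tangents, and force $\mathcal{F}$ to be either $0$ or $TX$.

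The hard part will be precisely this last recognition step, run against the local structure of $X$ at a general point, and I expect it to be the genuine obstacle. For a rational homogeneous space $G/P$ with $P$ maximal the isotropy representation on the tangent space is irreducible, which rules out any invariant proper subsheaf and recovers stability (Ramanan, Umemura); the optimistic hope is that the same rigidity propagates through the minimal-rational-curve geometry. But the quasi-homogeneous, non-homogeneous geometry of smooth horospherical varieties shows the obstacle is real: the open $G$-orbit carries an invariant distribution that survives as a positive-slope foliation $\mathcal{F}\subsetneq TX$, so the forcing step fails and no contradiction appears. Consequently the strategy cannot be completed in general, and indeed the statement is false as worded --- Kanemitsu's smooth horospherical variety realizes exactly such a destabilizing $\mathcal{F}$. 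What does survive of the argument is the weaker conclusion that an irreducible family of unbendable rational curves spanning the tangent spaces at general points forces every endomorphism of $TX$ to be a scalar, i.e.\ $\End(TX)=\mathbb{C}$ (simplicity), and it is this positive result that the present paper establishes.
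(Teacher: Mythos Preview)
Your proposal is not a proof, and correctly so: the statement in question is a \emph{conjecture}, and the paper does not prove it. On the contrary, the paper records immediately after stating it that Kanemitsu has disproved it by exhibiting smooth horospherical varieties of Picard number one (types $(B_n,\varpi_{n-1},\varpi_n)$ for $n\geq 4$ and $(F_4,\varpi_3,\varpi_2)$) whose tangent bundles are not semistable. So there is no ``paper's own proof'' to compare against; your final assessment that the statement is false as worded is exactly what the paper says.

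Your surrounding discussion is reasonable as motivation but is not the paper's content. The reduction to a positive-slope foliation via the maximal destabilizing subsheaf and the appeal to Bogomolov--McQuillan/Campana--P\u{a}un algebraicity criteria are plausible heuristics, yet the paper neither invokes foliations nor attempts any partial salvage of the semistability conjecture. What the paper actually does --- and what your last sentence correctly anticipates --- is abandon semistability altogether and prove the weaker statement that $TX$ is \emph{simple} for smooth horospherical $X$ of Picard number one, by producing an unbendable rational component (built from the highest-root curve $C_\theta$) whose tangent directions span $T_xX$ at general points, and then applying Hwang's criterion. That argument is carried out in Propositions~\ref{pro:unbendable curves in rational homogeneous variety} and~\ref{pro:unbendable curves in smooth horospherical variety}, not here.
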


 This conjecture has been verified in many cases (see e.g., \cite{PW95}, \cite{Hw98}).  Recently, however, a counterexample
has been discovered by Kanemitsu (\cite{Ka20}):
 The tangent bundle of a smooth horospherical variety $X$ of Picard number one   is not semistable if $X$ is of type $(B_n, \varpi_{n-1}, \varpi_n)$, where $n \geq 4$ or of type $(F_4, \varpi_3, \varpi_2)$, and is stable, otherwise. For the types of horospherical varieties of Picard number one, see Proposition \ref{prop:horospherical varieties pasquier}.

Only scalar multiplications are endomorphisms of a stable vector bundle, so that  stable vector bundles are simple. Instead of stability,
we consider a weaker conjecture.

\begin{conjecture} \label{conj:simplicity}  The tangent bundle of a Fano manifold of Picard number one is simple.
\end{conjecture}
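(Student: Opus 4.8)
The plan is to prove Conjecture~\ref{conj:simplicity} for a smooth horospherical variety $X$ of Picard number one, in the concrete form $\Hom_{\mathcal O_X}(TX,TX)=\mathbb C\cdot\mathrm{id}_{TX}$. I would fix $\varphi\in\Hom_{\mathcal O_X}(TX,TX)$ and begin with a reduction: since $X$ is compact, $\operatorname{tr}\varphi\in H^0(X,\mathcal O_X)=\mathbb C$ is a constant, say $\operatorname{tr}\varphi=\lambda_0\dim X$, so it is enough to prove that $\varphi_x:=\varphi|_{T_xX}$ is a scalar multiple of $\mathrm{id}_{T_xX}$ for $x$ in a dense open subset $U\subseteq X$. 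Indeed, on $U$ that scalar is forced to equal $\lambda_0$ by the trace, so $\varphi-\lambda_0\,\mathrm{id}_{TX}$ is a morphism of vector bundles vanishing on the dense set $U$, hence identically zero.

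The geometric input is the existence, for such $X$, of an irreducible family $\mathcal K$ of unbendable rational curves whose tangent directions at a general point span the tangent space there; I come back to its construction below. Granting it, the key local observation comes from restricting $\varphi$ to a general member $f\colon\mathbb P^1\to X$ of $\mathcal K$. By unbendability, $f^*TX\cong\mathcal O(2)\oplus\mathcal O(1)^{\oplus p}\oplus\mathcal O^{\oplus q}$ with $p+q=\dim X-1$, and $df$ identifies $T_{\mathbb P^1}$ with the unique $\mathcal O(2)$-summand $L\subseteq f^*TX$. Since $\Hom(\mathcal O(2),\mathcal O(1))=0=\Hom(\mathcal O(2),\mathcal O)$, the composite $L\hookrightarrow f^*TX\xrightarrow{\,f^*\varphi\,}f^*TX\twoheadrightarrow\mathcal O(1)^{\oplus p}\oplus\mathcal O^{\oplus q}$ is zero, so $f^*\varphi$ preserves $L$ and acts there by a scalar in $H^0(\mathbb P^1,\mathcal O_{\mathbb P^1})=\mathbb C$. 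Consequently, for each $t\in\mathbb P^1$ the tangent direction $[df(T_t\mathbb P^1)]\in\mathbb P(T_{f(t)}X)$ is an eigendirection of $\varphi_{f(t)}$.

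Next I would fix a general point $x$ and let $\mathcal C_x\subseteq\mathbb P(T_xX)$ be the closure of the set of tangent directions at $x$ of members of $\mathcal K$ through $x$. By the previous step every point of $\mathcal C_x$ is an eigendirection of $\varphi_x$; since $\mathcal C_x$ spans $T_xX$ by hypothesis, $\varphi_x$ has a spanning set of eigenvectors, hence is diagonalizable, $T_xX=\bigoplus_{i=1}^k V_{\lambda_i}$ with the $\lambda_i$ distinct. The projective subspaces $\mathbb P(V_{\lambda_1}),\dots,\mathbb P(V_{\lambda_k})$ are pairwise disjoint, and their union is exactly the locus of eigendirections of $\varphi_x$, so $\mathcal C_x\subseteq\bigcup_{i=1}^k\mathbb P(V_{\lambda_i})$. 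If $\mathcal C_x$ is connected, it lies in a single $\mathbb P(V_{\lambda_i})$, and then $\mathcal C_x$ spanning $T_xX$ forces $V_{\lambda_i}=T_xX$, i.e.\ $\varphi_x$ is a homothety. By the first paragraph this yields $\varphi=\lambda_0\,\mathrm{id}_{TX}$, hence the simplicity of $TX$.

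What is left is the real work: producing $\mathcal K$, and checking that $\mathcal C_x$ is connected for general $x$. For the first I would treat separately the finitely many types in Proposition~\ref{prop:horospherical varieties pasquier}, in each exhibiting an irreducible family of rational curves — natural candidates are the lines in a suitable $G$-equivariant projective embedding, or orbit closures of well-chosen one-parameter subgroups of $G$ — that covers a dense subset of $X$ and whose tangent directions at a general point span the tangent space. The delicate step is then to prove that a general such curve is unbendable, that is, that $f^*TX$ has no negative summand; I expect this to require an explicit cohomological computation exploiting the horospherical structure (the transitive $G$-action on the open orbit and the root-theoretic description of $TX$ along it), and I expect it to be the main obstacle. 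For the connectedness of $\mathcal C_x$, this should follow from the irreducibility of $\mathcal K$ — hence of its universal family and of the image of the tangent map in $\mathbb P(TX)$ — together with a short argument (for instance via Stein factorization of the evaluation morphism), or be read off directly from an explicit description of $\mathcal C_x$ obtained while constructing $\mathcal K$. Once $\mathcal K$ is in hand, the remaining deductions are the formal ones above.
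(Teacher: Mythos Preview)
Your strategy matches the paper's: reduce simplicity to the existence of an irreducible unbendable family whose tangent directions span $T_xX$ at general $x$, then run the eigenvector argument. The implication ``such a family $\Rightarrow$ $TX$ simple'' is precisely Proposition~\ref{prop:nondegenerate unbendable curve implies simplicity}, which the paper quotes from \cite{Hw02}; your trace/connectedness argument is essentially a re-derivation of it (the paper's version replaces connectedness of $\mathcal C_x$ by chain-connectedness of the family, which is automatic in Picard number one). The substantive difference is in how $\mathcal K$ is produced. You propose a case-by-case search through the Pasquier list with explicit cohomological checks; the paper instead gives a \emph{uniform} construction: take the curve $C_\theta\subset G/P$ associated to the highest root $\theta$, observe that $(\beta,\theta^\vee)\le 1$ for every $\beta\in\Phi^+_{\Delta_1}\setminus\{\theta\}$ by a two-line root-system argument, so $T(G/P)|_{C_\theta}=\mathcal O(2)\oplus\mathcal O(1)^p\oplus\mathcal O^q$, and nondegeneracy of $\mathcal C_o$ follows because the closure of $P.[E_\theta]$ in $\mathbb P(\mathfrak g)$ is the adjoint variety. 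For the non-homogeneous horospherical types the paper embeds $G/P$ as the zero section of the open orbit $G\times_P V$ and reads $(\lambda_V,\theta^\vee)\in\{0,1\}$ directly off the inverse Cartan matrix, so the normal direction contributes only $\mathcal O$ and $\mathcal O(1)$ summands and $TX|_{C_\theta}$ keeps a single $\mathcal O(2)$. This buys one argument for all types and no cohomology computations. One slip in your outline: ``unbendable'' is not the same as ``$f^*TX$ has no negative summand''; you also need exactly one $\mathcal O(2)$ and no $\mathcal O(a)$ with $a\ge 3$, and it is exactly the upper bound $(\beta,\theta^\vee)\le 1$ that secures this.
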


 In this paper, we prove that Conjecture \ref{conj:simplicity} is valid for any smooth horospherical variety of Picard number one.

\begin{theorem} \label{thm:simplicity of horospherical}
The tangent bundle of any smooth horospherical varieties of Picard number one  is simple.
\end{theorem}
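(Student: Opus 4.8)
The plan is to take an arbitrary endomorphism $\phi\in H^0(X,\End(TX))$ and to prove that $\phi$ is a scalar multiple of the identity, exploiting the irreducible family of unbendable rational curves on $X$ whose tangent directions span $T_xX$ at a general point $x$. Two facts about $\phi$ get the argument started. First, since $X$ is smooth, projective and connected, the coefficients of the characteristic polynomial of $\phi$ are global functions on $X$, hence constants, so every fibre $\phi_x\colon T_xX\to T_xX$ has the same characteristic polynomial; let $\mu_1,\dots,\mu_k$ be its distinct roots. Second, if $f\colon\mathbb P^1\to X$ is an unbendable rational curve, so that $f^*TX\cong\mathcal O(2)\oplus\mathcal O(1)^{\oplus p}\oplus\mathcal O^{\oplus q}$, then the component of $f^*\phi$ from the $\mathcal O(2)$-summand to the remaining summands is a section of $\mathcal O(-1)^{\oplus p}\oplus\mathcal O(-2)^{\oplus q}=0$; hence $f^*\phi$ preserves the $\mathcal O(2)$-summand, and since this summand equals $df(T\mathbb P^1)$ (the composite of $df$ with the projection to $\mathcal O(1)^{\oplus p}\oplus\mathcal O^{\oplus q}$ vanishes and $f$ is nonconstant), the endomorphism $f^*\phi$ preserves $df(T\mathbb P^1)$ and acts on it by a constant $\nu_f$. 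Evaluating at a point where $f$ is an immersion exhibits $\nu_f$ as an eigenvalue of $\phi$ at a point of $f(\mathbb P^1)$, so $\nu_f\in\{\mu_1,\dots,\mu_k\}$.

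Next, because the tangent lines $T_xC$ of the curves of the family through a general point $x$ are eigenvectors of $\phi_x$ and span $T_xX$, the operator $\phi_x$ is diagonalizable for general $x$. Together with the constancy of the characteristic polynomial this forces $\dim\ker(\phi_x-\mu_j\,\mathrm{id})$ to equal the multiplicity of $\mu_j$ at every $x$, so that $E_j:=\ker(\phi-\mu_j\,\mathrm{id})$ is a subbundle of constant rank and $TX=E_1\oplus\cdots\oplus E_k$ with $\phi|_{E_j}=\mu_j\,\mathrm{id}$. If $k=1$ then $\phi=\mu_1\,\mathrm{id}$ and we are done; so from now on I assume $k\ge 2$, so that every $E_j$ is a proper subbundle of $TX$.

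The heart of the proof is the claim that all curves of the family are tangent to one and the same subbundle $E_j$. Indeed, for an unbendable curve $f$ the line subbundle $df(T\mathbb P^1)$ is $f^*\phi$-invariant with eigenvalue $\nu_f$, hence is contained in $\ker(f^*\phi-\nu_f\,\mathrm{id})=f^*E_{j(f)}$, where $\mu_{j(f)}=\nu_f$; that is, $f(\mathbb P^1)$ is everywhere tangent to the distribution $E_{j(f)}$. Now let $\rho\colon\mathcal U\to\mathcal K$ be the universal family and $\mu\colon\mathcal U\to X$ the evaluation morphism; on the dense open, irreducible locus of $\mathcal U$ on which the curves are immersions, the restriction of $d\mu$ to the relative tangent line bundle $T_{\mathcal U/\mathcal K}$ satisfies $\mu^*\phi\circ d\mu=\nu\cdot d\mu$ for a regular function $\nu$, whose values lie in the finite set $\{\mu_1,\dots,\mu_k\}$; hence $\nu$ is constant, say $\nu\equiv\mu_j$, because $\mathcal U$ is irreducible. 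So every curve of the family is tangent to $E_j$, and therefore at a general point $x$ the tangent vectors $T_xC$ of all curves of the family through $x$ lie in $E_j(x)$; since by hypothesis they span $T_xX$ this gives $E_j(x)=T_xX$, contradicting $k\ge 2$. Hence $k=1$ and $\phi=\mu_1\,\mathrm{id}$, which proves Theorem~\ref{thm:simplicity of horospherical}.

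I do not expect the difficulty to be in this deduction; its one delicate point is the globalization of the per-curve scalars $\nu_f$ into a genuine regular function on the irreducible parameter space, i.e.\ checking that everything is algebraic over a dense open subset of $\mathcal K$ on which the curves behave standardly. The real weight of the paper lies instead in constructing, for every smooth horospherical variety of Picard number one, an irreducible family of unbendable rational curves whose tangent vectors span the general tangent space. What the argument above trades on is the rigidity that irreducibility of the parameter space forces one and the same eigenvalue of $\phi$ onto the tangent direction of every curve of the family — which is exactly what makes the spanning hypothesis incompatible with $\phi$ having more than one eigenvalue.
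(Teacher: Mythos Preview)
Your argument is correct and follows exactly the paper's strategy: take as input the existence of an irreducible unbendable family with nondegenerate variety of tangents (the paper's Proposition~\ref{pro:unbendable curves in smooth horospherical variety}, which you rightly identify as where the real work lies) and deduce simplicity from it. The paper carries out this deduction in one line by quoting Proposition~\ref{prop:nondegenerate unbendable curve implies simplicity} (Hwang's Theorem~2 in \cite{Hw02}) together with the remark that Picard number one forces the chain--connectedness condition~(2); you have instead written out a self-contained proof of that implication via the eigenbundle decomposition $TX=\bigoplus_j E_j$, and your version is even slightly sharper in that it uses only the irreducibility of the parameter space and never appeals to chain--connectedness or to the Picard number.
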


To prove Theorem \ref{thm:simplicity of horospherical} we use  unbendable rational curves. A rational curve $f:\mathbb P^1 \rightarrow X$ in a uniruled projective manifold $X$ is said to be {\it unbendable} if the pull-back $f^*TX$ of the tangent bundle $TX$ of $X$   is decomposed as $\mathcal O(2) \oplus \mathcal O(1)^p \oplus \mathcal O^q$ for some nonnegative integers $p,q$.
Minimal rational curves are unbendable but the converse is not true (Remark \ref{rem:unbendable but not minimal}). Here, by a minimal rational curve we mean a rational curve $f:\mathbb P^1 \rightarrow X$ with $f^*TX$ being nonnegative, and the degree of $f$ with respect to a fixed ample line bundle is minimal among all such rational curves.

We expect  unbendable rational components   will play  an important role in understanding the geometry of a uniruled projective manifold $X$ as minimal rational components did (see e.g., \cite{Hw12} and \cite{Mo08} and references therein).
When $X$ has Picard number one, we get the simplicity of the tangent bundle $TX$  if there is an irreducible family of unbendable rational curves whose tangent vectors span   $\mathbb PT_x X$ at a general point $x $  in $X$ (Proposition \ref{prop:nondegenerate unbendable curve implies simplicity}).

We prove that for a rational homogeneous variety $G/P$ with $G$ simple or a smooth horospherical variety of Picard number one,
 such an unbendable rational component exists
(Proposition \ref{pro:unbendable curves in rational homogeneous variety}
and Proposition \ref{pro:unbendable curves in smooth horospherical variety}).
Then Theorem \ref{thm:simplicity of horospherical} follows from Proposition \ref{prop:nondegenerate unbendable curve implies simplicity} and the condition that $X$ has Picard number one.

In relation with Conjecture \ref{conj:simplicity} we ask whether there is such an unbendable rational component  for any Fano manifold  of Picard number one.

\begin{conjecture} \label{conj:nondegenerate unbendable curves} A Fano manifold of Picard number one admits an irreducible family of unbendable rational curves
whose tangent vectors span the tangent spaces of the Fano manifold at general points.
\end{conjecture}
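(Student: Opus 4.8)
The plan is to produce the desired family by smoothing chains of minimal rational curves, organizing the argument around the bracket filtration of the distribution spanned by the variety of minimal rational tangents (VMRT). First I would fix an irreducible component $\mathcal K$ of the space of rational curves on the Fano manifold $X$ of minimal anticanonical degree $d$ among those dominating $X$; its members are minimal rational curves, hence unbendable, and through a general point $x$ they determine the VMRT $\mathcal C_x \subset \mathbb P T_x X$. Let $W \subset TX$ be the subbundle, defined over the locus of general points, whose fibre $W_x$ is the linear span of $\mathcal C_x$. If $W = TX$ then $\mathcal K$ itself is the desired family and there is nothing more to prove. The substantial case --- which contains Kanemitsu's horospherical examples and, for instance, odd symplectic Grassmannians --- is $W \subsetneq TX$.

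In that case, the first claim is that $W$ is bracket-generating: the weak derived flag $W = F^1 \subset F^2 \subset \cdots$ with $F^{k+1} = F^k + [W, F^k]$ reaches $TX$ after finitely many, say $N$, steps. Indeed, if its bracket-closure $\widehat W$ were a proper distribution it would be involutive, and every $\mathcal K$-curve, being tangent to $W$ at its general points and hence everywhere, would lie in a single leaf of the induced holomorphic foliation; since a Fano manifold of Picard number one is connected by chains of minimal rational curves, all of $X$ would lie in one leaf, forcing $\widehat W = TX$, a contradiction.

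The second step is to fill the successive quotients $F^k / F^{k-1}$ by unbendable rational curves. Given a length-$k$ chain $C_1 \cup \cdots \cup C_k$ of general $\mathcal K$-curves, transversal at the nodes, with $C_1$ through a general point $x$, each $C_i$ is free, so the chain satisfies $H^1(f^*TX) = 0$; a general smoothing $C$ then has degree $kd$, passes through a general point, has $f^*TX$ globally generated, and --- by semicontinuity of the splitting type from the $k$ components together with genericity of the gluing data, which prevents a summand $\mathcal O(a)$ with $a \geq 3$ from appearing --- satisfies $f^*TX \cong \mathcal O(2) \oplus \mathcal O(1)^p \oplus \mathcal O^q$, so $C$ is again unbendable. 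At $x$, the positive part of $f^*TX$ contains the $\mathcal C_x$-directions coming from deforming $C_1$ inside $\mathcal K$ together with the directions obtained by successively pivoting $C_2, \ldots, C_k$ about the nodes; a Lie-bracket computation identifies the latter with directions spanning $F^k_x$ modulo $F^{k-1}_x$. Hence for $k = N$ these unbendable curves of degree $Nd$ have tangent directions spanning $\mathbb P T_x X$ at a general $x$, and one takes the irreducible component of the relevant space of rational curves containing the smoothings.

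The main obstacle is the control of splitting types under deformation: one must simultaneously guarantee that unbendability persists along the smoothing (no summand of degree $\geq 3$ or $< 0$ is created) and that the directions coming from pivoting the later components are genuinely new, i.e. not already contained in $F^{k-1}_x$. On an arbitrary Fano manifold of Picard number one the VMRT --- let alone the geometry of higher-degree rational curves --- may be singular, reducible, or simply not understood, and it is precisely the difficulty of carrying out these two checks in full generality that keeps Conjecture \ref{conj:nondegenerate unbendable curves} open; in the rational homogeneous and smooth horospherical Picard-one cases the argument can be pushed through because there the relevant rational curves are completely explicit, as in Propositions \ref{pro:unbendable curves in rational homogeneous variety} and \ref{pro:unbendable curves in smooth horospherical variety}.
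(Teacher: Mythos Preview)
The statement you are addressing is Conjecture~\ref{conj:nondegenerate unbendable curves}, which the paper explicitly leaves open; there is no proof in the paper to compare your proposal against. The paper establishes the conjecture only for rational homogeneous $G/P$ with $G$ simple and for smooth horospherical varieties of Picard number one, and in those cases its method is \emph{not} the smoothing-of-chains strategy you outline: it writes down directly the highest-root curve $C_{\theta}$, checks unbendability by the root-system inequality $(\beta,\theta^{\vee})\le 1$, and obtains nondegeneracy of the variety of tangents from the fact that the $P$-orbit of $[E_{\theta}]$ is dense in the highest-weight orbit. Your closing sentence, that in these cases ``the argument can be pushed through,'' is therefore misleading: Propositions~\ref{pro:unbendable curves in rational homogeneous variety} and~\ref{pro:unbendable curves in smooth horospherical variety} do not push your argument through, they bypass it entirely.

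As a general strategy your outline has two genuine gaps, both of which you in effect concede. First, the assertion that a general smoothing of a length-$k$ chain of unbendable curves is again unbendable is not justified: semicontinuity controls $h^1$ (freeness) and lower bounds on the splitting type, but it does not prevent the appearance of a second summand of degree $\ge 2$; indeed, on a smoothing of anticanonical degree $kd$ the tangent direction still contributes a single $\mathcal O(2)$, but nothing you have said rules out further $\mathcal O(2)$- or $\mathcal O(a)$-factors with $a\ge 3$ arising from the positive part of the normal bundle. Second, the identification of the ``pivoting'' directions with $F^k_x/F^{k-1}_x$ via a Lie-bracket computation is asserted rather than carried out; making this precise would require a statement relating the infinitesimal variation of tangent directions of smoothed combs to the Frobenius bracket of the VMRT distribution, and no such statement is available in the generality you need. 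Until both points are settled, what you have written is a heuristic program for attacking the conjecture, not a proof.
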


If Conjecture \ref{conj:nondegenerate unbendable curves} holds, then so does Conjecture \ref{conj:simplicity} by Proposition \ref{prop:nondegenerate  unbendable curve implies simplicity}. Minimal rational curves are unbendable curves, but   their  tangent vectors do not always generate  the tangent space at a general point.
Since a Fano manifold of Picard number one is rationally connected,  there  exists an irreducible family of rational curves whose tangent vectors span the tangent space  at a general point, but the restriction of the tangent bundle to rational curves in this family may have more than one $\mathcal O(2)$-factors, or more generally, may have $\mathcal O(a)$-factors with $a \geq 3$.
The question is whether we have an irreducible family of rational curves having both properties, rational curves in the family are unbendable and their tangent vectors  span the tangent space at a general point.


 \section{Unbendable rational curves} \label{sect:unbendable rational curve}

Let $X$ be a uniruled projective manifold.
We say that a rational curve $f:\mathbb P^1 \rightarrow X$ in  $X$ is
\begin{enumerate}
\item[$\bullet$] {\it unbendable} if the pull-back $f^*TX$ of the tangent bundle $TX$ of $X$   is decomposed as $\mathcal O(2) \oplus \mathcal O(1)^p \oplus \mathcal O^q$ for some nonnegative integers $p,q$;
\item[$\bullet$] {\it minimal} if the pull-back $f^*TX$ of the tangent bundle $TX$ of $X$  is  nonnegative, and the degree of $f$ with respect to a fixed ample line bundle is minimal among all such rational curves.
\end{enumerate}
 An irreducible component $\mathcal H$ of the Hilbert scheme of rational  curves in $X$ containing an unbendable (minimal, respectively) rational curve is called an {\it unbendable (minimal, respectively) rational component} of $X$.





Fix    an unbendable rational component $\mathcal H$ of $X$.
 For a   point $x \in X$ denote by $\mathcal H_x$ the subscheme  of $\mathcal H$ consisting of members passing through $x$, and  define a rational map $$\tau_x:\mathcal H_x \dasharrow \mathbb PT_x X$$ by sending a rational curve $[C] \in \mathcal H_x$ smooth at $x$ to its tangent direction $[T_xC] \in \mathbb PT_xX$. The closure $\mathcal C_x$ of the image of  $\tau_x$ is called the {\it variety of tangents} at $x$ of the family $\mathcal H$.

 \begin{proposition} [Theorem 2 of \cite{Hw02}] \label{prop:nondegenerate unbendable curve implies simplicity}
If $X$ has    an unbendable rational component $\mathcal H$
such that
\begin{enumerate}
\item   the variety of tangents at $x$ of $\mathcal H$ is nondegenerate  in the projective tangent space $\mathbb P T_xX$ for general $x \in X$;
\item   a general point of $X$ is joined to a point in $X$ by a connected chain of curves  in $\mathcal H$.
\end{enumerate}
then $TX$ is    simple, that is, any endomorphism of $TX$ is a scalar multiplication.

\end{proposition}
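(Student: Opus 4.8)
The plan is to show that every $\phi\in\End(TX)=H^0(X,\End(TX))$ is a scalar multiple of the identity, by restricting $\phi$ to the unbendable members of $\mathcal H$ and combining a pointwise eigenvector statement with the global geometry of the variety of tangents. First I would record the key local observation. Let $f\colon\mathbb P^1\to X$ be a nonconstant member of $\mathcal H$, so that $f^*TX\cong\mathcal O(2)\oplus\mathcal O(1)^p\oplus\mathcal O^q$. Since $\Hom(\mathcal O(2),\mathcal O(k))=0$ for $k<2$, the differential $df\in\Hom(T\mathbb P^1,f^*TX)=\Hom(\mathcal O(2),f^*TX)$ lands, up to a scalar, in the $\mathcal O(2)$-summand, hence is nowhere vanishing; so every such $f$ is an immersion and $[T_{f(t)}C]$ is, for every $t$, the fibre at $t$ of the distinguished line subbundle $\mathcal O(2)\subset f^*TX$, namely the image of the evaluation map $H^0(\mathbb P^1,f^*TX(-2))\otimes\mathcal O(2)\to f^*TX$. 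Because $H^0(\mathbb P^1,f^*TX(-2))\cong\mathbb C$ and $\phi|_C$ induces an endomorphism of $f^*TX(-2)$, it acts on this one-dimensional space by a scalar $\mu(C)$; consequently $T_{f(t)}C$ is an eigenvector of $\phi_{f(t)}$ with eigenvalue $\mu(C)$ for every $t\in\mathbb P^1$ — in particular the eigenvalue is the same at all points of $C$.

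Next I would globalize. The coefficients of the characteristic polynomial of $\phi$ are global holomorphic functions on the projective manifold $X$, hence constant, so the eigenvalues of $\phi_x$, with multiplicities, are independent of $x$. For general $x$, hypothesis (1) says $\mathcal C_x$ spans $\mathbb PT_xX$, while by the previous paragraph $\mathcal C_x$ is contained in the union of the eigenspaces of $\phi_x$ (a closed cone, being a finite union of linear subspaces); a linear map admitting a spanning family of eigenvectors is diagonalizable, so $\phi_x$ is diagonalizable for general $x$. Therefore on a Zariski-dense open $U\subseteq X$ the bundle $TX|_U$ splits as $\bigoplus_i V_i$ into eigenbundles of constant ranks for distinct constant eigenvalues $\alpha_i$, and the projectivizations $\mathbb P V_i\subset\mathbb P(TX|_U)$ are pairwise disjoint closed subvarieties whose disjoint union contains $\mathcal C|_U$.

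Finally I would use the irreducibility of the family. The total space $\mathcal C\subseteq\mathbb P(TX)$ of the varieties of tangents is irreducible, being the closure of the image of the tangent map defined on the irreducible universal family over the irreducible component $\mathcal H$; moreover $\mathcal C\to X$ is dominant, so $\mathcal C|_U$ is a nonempty open, hence irreducible, subset of $\mathcal C$. Lying in a disjoint union of closed subvarieties, $\mathcal C|_U$ must be contained in a single one, say $\mathbb P V_{i_0}$. Then $\mathcal C_x\subseteq\mathbb P(V_{i_0})_x$ for general $x$, and since $\mathcal C_x$ spans $\mathbb PT_xX$ this forces $V_{i_0}=TX|_U$: thus $\phi_x$ has the single eigenvalue $\alpha_{i_0}$ and $\phi|_U=\alpha_{i_0}\cdot\mathrm{id}$. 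It remains to see $\alpha_{i_0}$ is one constant: for general $x$ lying on a general $C\in\mathcal H$ we have $\alpha_{i_0}(x)=\mu(C)$, which is constant along $C$, so $\alpha_{i_0}$ is locally constant along connected chains of members of $\mathcal H$, and by hypothesis (2) general points of $X$ are joined by such chains, whence $\alpha_{i_0}\equiv c$ for a constant $c$. (Alternatively, $\alpha_{i_0}=\operatorname{tr}(\phi)/\dim X$ is constant because $\operatorname{tr}(\phi)\in H^0(X,\mathcal O_X)=\mathbb C$.) Then $\phi=c\cdot\mathrm{id}$ on the dense open set $U$, hence on $X$.

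The step I expect to be the main obstacle is the passage from the pointwise eigenvector statement to the global eigenbundle decomposition and the use of irreducibility of $\mathcal C$ to confine the variety of tangents to a single eigen-distribution: the pointwise claim is essentially formal, but one must carefully set up the Zariski-open locus on which the eigenbundles $V_i$ have constant rank (using upper semicontinuity of $\dim\Ker(\phi-\alpha_i\cdot\mathrm{id})$ together with constancy of the characteristic polynomial), check that $\bigsqcup_i\mathbb P V_i$ is closed in $\mathbb P(TX|_U)$, and verify that $\mathcal C|_U$ is a nonempty irreducible subset of $\mathcal C$ so that it cannot straddle two of the components.
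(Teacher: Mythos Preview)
The paper does not supply a proof of this proposition at all: it is quoted, without argument, as Theorem~2 of \cite{Hw02} and then used as a black box. So there is nothing in the paper to compare your proof against.

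That said, your argument is correct and is essentially the standard one (and, as far as one can tell, the one in \cite{Hw02}): the tangent line of an unbendable curve is the unique $\mathcal O(2)$-summand of $f^*TX$, hence is preserved by any global endomorphism $\phi$; the characteristic polynomial of $\phi$ is constant on the projective $X$; nondegeneracy of $\mathcal C_x$ forces $\phi_x$ to be diagonalizable at general $x$; and irreducibility of the total space $\mathcal C$ confines it to a single eigen-distribution, which must then be all of $TX$. One small observation worth making explicit: as your parenthetical trace argument shows, hypothesis~(2) is not actually used in your proof once the eigenvalues are known to be global constants and $\mathcal H$ is irreducible---condition~(1) together with the irreducibility of $\mathcal H$ already suffices. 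This is not a defect, just a sign that the hypotheses in the stated form are slightly stronger than what your line of argument requires.
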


When $X$ has Picard number one, the condition (2) of Proposition \ref{prop:nondegenerate unbendable curve implies simplicity} is satisfied automatically:  There is a sequence of locally closed submanifolds $\mathcal X^0=\{x\} \subsetneq \mathcal X^1 \subsetneq \dots \subsetneq \mathcal X^m$, where $\dim \mathcal X^m = \dim X$, such that any point in $\mathcal X^k$ can be connected to a point in $\mathcal X^{k-1}$ by a rational curve in $\mathcal H$ for any $1 \leq k \leq m$ (see Section 4.3 of \cite{HM98} or Section 3 of \cite{Mo08}). 

For example, the moduli of semistable vector bundles of rank $r$ and with a fixed determinant  (\cite{Hw02}) or a wonderful group compactification (\cite{BF15}) have an unbendable rational component whose variety of tangents is nondegenerate in $\mathbb P(T_xX)$ at a general point $x \in X$. In the first case, in fact,  a  minimal rational component satisfies the desired property.  However, 
this is not the case  in general
(Remark \ref{rem:unbendable but not minimal}), and we need to consider an unbendable rational component  which is not minimal to get the desired property.

 \section{Existence of unbendable rational curves}
 \subsection{Rational homogeneous varieties}

 Let $\frak g$ be a complex simple Lie algebra. Let $\frak  h$ be a Cartan subalgebra and  $\Delta=\{\alpha_1, \dots, \alpha_{\ell}\}$ be a system of simple roots. For each root $\alpha$, there is an element $H_{\alpha}$ in $\frak h$ satisfying that $\alpha(H)= (H_{\alpha}, H)$ for any $H \in \frak h$, where $ (\,,\,)$ is the  Killing form of $\frak g$. Then the Killing form induces a symmetric bilinear form $(\,,\,)$ on the set $\Phi$ of roots defined  by $(\alpha, \beta):=(H_{\alpha}, H_{\beta})$ for $\alpha, \beta \in \Phi$.
 Put $h_{\alpha}:=2H_{\alpha}/ (\alpha, \alpha)$ and $\alpha^{\vee}:=2\alpha/(\alpha, \alpha)$ for $\alpha \in \Phi$. Then   $\beta(h_{\alpha})=(\beta, \alpha^{\vee})$.

For a subset $\Delta_1\subset \Delta$, define a function $n_{\Delta_1}: \Phi \rightarrow \mathbb Z$ by $$n_{\Delta_1}(\alpha) = \sum_{\alpha_i \in \Delta_1} n_i$$
for $\alpha = \sum_{i=1}^{\ell} n_i \alpha_i \in \Phi$. Define $\Phi_{\Delta_1}^{\pm} $ and $\Phi_{\Delta_1}^0$ by
$$\Phi_{\Delta_1}^{\pm}:= \{\alpha \in \Phi: \Phi: n_{\Delta_1}(\alpha) \in \mathbb Z_{\pm} \} \text{ and } \Phi_{\Delta_1}^0 :=\{ \alpha \in \Phi: n_{\Delta_1}(\alpha) =0\}  .$$
and put  $\frak p_{\Delta_1} =\frak h \oplus (\oplus_{\alpha \in \Phi_{\Delta_1}^0 \cup \Phi_{\Delta_1}^-} \frak g_{\alpha}   )$  and $\frak m_{\Delta_1} = \oplus_{\alpha \in \Phi_{\Delta_1}^+} \frak g_{\alpha}$.
Then $\Phi = \Phi_{\Delta_1}^- \cup \Phi_{\Delta_1}^0 \cup \Phi_{\Delta_1}^+$ and
$\frak g = \frak p_{\Delta_1} \oplus \frak m_{\Delta_1} $.
Fix $\mathfrak p=\mathfrak p_{\Delta_1}$ 
from now on. Note that $\mathfrak p$ contains the negative Borel subalgebra $\frak b := \frak p_{\Delta}$.

Let $G$ be a  simply connected algebraic group with Lie algebra $\frak g$ and $P$ be the subgroup of $G$ with Lie algebra $\frak p$.
 For each root $\alpha$,
 take $E_{\alpha} \in \frak g_{\alpha}, E_{-\alpha} \in \frak g_{-\alpha}$ such that $(E_{\alpha}, E_{-\alpha})=1$. Then  $ [E_{\alpha}, E_{-\alpha}]=H_{\alpha}  \in \frak h$ and $E_{\alpha}, E_{-\alpha}, H_{\alpha}$ generate a subalgebra $\frak{sl}_{\alpha}$ of $\frak g$ which is isomorphic to  $\frak{sl}_2$.
 Let
 $S_{\alpha}$ be the subgroup of $G$ with Lie algebra $\frak {sl}_{\alpha}$.
    The $S_{\alpha}$-orbit of $o \in G/P$  is a rational curve in $G/P$,  denoted by  $C_{\alpha}$.

    The tangent bundle $T(G/P)$ is the homogeneous vector bundle on $G/P$ associated to the representation $P \rightarrow GL(\frak g/\frak p)$.
      The tangent bundle  $T(G/P)$  restricted to $C_{\alpha}$ is decomposed as $\oplus_{\beta \in \Phi_{\Delta_1}^+} \mathcal O(\beta(h_{\alpha}))=\oplus_{\beta \in \Phi_{\Delta_1}^+} \mathcal O((\beta,\alpha ^{\vee}))$ (See the proof of Proposition 2 of \cite{HM02}).

 \begin{proposition} \label{pro:unbendable curves in rational homogeneous variety}  Let $ G/P$ be a  rational homogeneous variety. Assume that $G$ is simple.
 Then there is an unbendable rational component $\mathcal H$ whose variety $\mathcal C_y$ of    tangents at any $y \in   G/P$ is nondegenerate in $\mathbb PT_y (G/P)$.
 \end{proposition}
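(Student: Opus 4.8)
The plan is to build the desired unbendable rational component out of the curves $C_\alpha$ attached to a carefully chosen root $\alpha$. Since $G$ is simple, the Weyl group acts transitively on roots of a fixed length, so there are at most two $G$-orbits of curves $C_\alpha$ up to the full symmetry; the key will be to select $\alpha$ so that (i) the splitting type $\oplus_{\beta\in\Phi^+_{\Delta_1}}\mathcal O((\beta,\alpha^\vee))$ has exactly one $\mathcal O(2)$-summand and no summand of degree $\geq 3$, i.e.\ the curve is unbendable, and (ii) the directions $[T_oC_\alpha]$ obtained by letting $\alpha$ vary over the $P$-translates span $T_o(G/P)$, i.e.\ the variety of tangents is nondegenerate. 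A natural candidate is to take $\alpha$ to be the \emph{highest root} $\tilde\alpha$ of $\frak g$ (or, in the non-simply-laced case where $\tilde\alpha$ might lie in $\Phi^0_{\Delta_1}$, a highest root among those $\beta$ with $n_{\Delta_1}(\beta)=1$). For the highest root one has $(\beta,\tilde\alpha^\vee)\in\{-1,0,1,2\}$ for every root $\beta$, with value $2$ attained only at $\beta=\tilde\alpha$ itself; restricting to $\beta\in\Phi^+_{\Delta_1}$ this gives precisely the unbendable splitting $\mathcal O(2)\oplus\mathcal O(1)^p\oplus\mathcal O^q$.

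The steps, in order, are as follows. \emph{Step 1:} Fix $\alpha$ as above and verify, using standard facts about the highest root of a simple Lie algebra, that $(\beta,\alpha^\vee)\leq 2$ for all $\beta\in\Phi$ with equality iff $\beta=\alpha$; conclude that $C_\alpha$ is unbendable. \emph{Step 2:} Let $\mathcal H$ be the irreducible component of the Hilbert scheme containing $C_\alpha$; since all curves in the $G$-orbit $G\cdot C_\alpha$ are projectively equivalent and $G$ is connected, this orbit is irreducible and lands in a single component $\mathcal H$, and $\mathcal H$ is $G$-homogeneous up to the closure. \emph{Step 3:} Compute the variety of tangents $\mathcal C_o\subset\mathbb PT_o(G/P)$: it contains the $P$-orbit of $[E_\alpha\bmod\frak p]$. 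The span of this $P$-orbit is a $P$-submodule of $\frak g/\frak p=\frak m_{\Delta_1}=\oplus_{\beta\in\Phi^+_{\Delta_1}}\frak g_\beta$; to show it is everything, use that $\frak g/\frak p$ is generated as a $P$-module (equivalently as a $\frak p$-module, via the nilradical of $\frak p$ acting by bracket) by the single vector $E_\alpha$ — because $\alpha$ being highest (among degree-$1$ roots) means every $E_\beta$ with $\beta\in\Phi^+_{\Delta_1}$ is reached from $E_\alpha$ by successively bracketing with root vectors $E_{-\alpha_i}$, $\alpha_i\in\Delta\setminus\Delta_1$. Hence $\mathcal C_o$ is nondegenerate. \emph{Step 4:} By $G$-homogeneity the same holds at every $y\in G/P$, giving the proposition.

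The main obstacle I expect is Step 3 together with the subtlety in the non-simply-laced, multi-node case hinted at in Step 1: one must choose $\alpha$ so that simultaneously $n_{\Delta_1}(\alpha)=1$ (so that $E_\alpha$ is a tangent vector at $o$, not an element of $\frak p$ or of a higher graded piece when $\frak p$ is not maximal), the resulting splitting on $C_\alpha$ has no $\mathcal O(\geq 3)$ and a unique $\mathcal O(2)$, and $E_\alpha$ generates $\frak g/\frak p$ over the nilradical of $\frak p$. When $\Delta_1$ is a single node this is classical (the curve is the minimal rational curve and these properties are standard VMRT facts). When $\Delta_1$ has several nodes, $\frak g/\frak p$ carries a grading by $n_{\Delta_1}$ with several pieces, and one needs the degree-$1$ part to be generated by a single highest-weight-like vector under the Levi-plus-nilradical action; this requires checking that the relevant $P$-module $\frak m_{\Delta_1}$, while not irreducible, is nonetheless cyclic generated by $E_\alpha$, which follows from connectedness of the Dynkin diagram but deserves a careful argument. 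A secondary point is to confirm that the component $\mathcal H$ we land in genuinely consists of unbendable curves at its general member (automatic here, since unbendability is the generic splitting type and our whole $G$-orbit of curves has this fixed type), and that $\mathcal H$ has the expected dimension so that $\tau_y$ is dominant onto $\mathcal C_y$.
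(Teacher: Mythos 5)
Your primary choice $\alpha=\theta$ (the highest root) is exactly the right one and coincides with the paper's; Steps 1, 2 and 4 are sound as stated. But two of your hedges point at genuine errors that would sink the argument if you followed them. First, the worry that $\theta$ might lie in $\Phi^0_{\Delta_1}$ is vacuous: the highest root of a simple Lie algebra has strictly positive coefficient on \emph{every} simple root, so $n_{\Delta_1}(\theta)>0$ for every nonempty $\Delta_1$ and $E_\theta$ always gives a nonzero tangent vector at $o$ (the tangent space is $\frak g/\frak p\cong\frak m_{\Delta_1}=\oplus_{\beta\in\Phi^+_{\Delta_1}}\frak g_\beta$, which contains \emph{all} the positively graded pieces, not just the degree-one piece -- there is no requirement $n_{\Delta_1}(\alpha)=1$). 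Your fallback, ``a highest root among those $\beta$ with $n_{\Delta_1}(\beta)=1$,'' would actually fail on both counts: such a root can be short (e.g.\ in $G_2$ or $C_\ell$), in which case $(\beta,\alpha^\vee)$ can equal $2$ or $3$ for $\beta\ne\alpha$ and unbendability is lost; and since the nilradical of $\frak p$ strictly lowers the $n_{\Delta_1}$-degree while the Levi factor preserves it, the $P$-submodule of $\frak g/\frak p$ generated by a degree-one vector is contained in the degree-one piece, so nondegeneracy fails whenever $n_{\Delta_1}(\theta)\ge 2$.

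The second error is in the mechanism of your Step 3: bracketing with $E_{-\alpha_i}$ for $\alpha_i\in\Delta\setminus\Delta_1$ preserves $n_{\Delta_1}$, so starting from $E_\theta$ you would never leave the top graded piece of $\frak m_{\Delta_1}$ and could not generate all of $\frak g/\frak p$. The correct statement is that you may bracket with $E_{-\alpha_i}$ for \emph{all} $\alpha_i\in\Delta$ -- these all lie in $\frak p$, which contains the negative Borel subalgebra -- and then the standard fact that every positive root is obtained from $\theta$ by successively subtracting simple roots while staying inside $\Phi$ shows that $E_\theta$ generates $\frak g/\frak p$ as a $\frak p$-module. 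The paper avoids this computation entirely by a global argument you may prefer: the closure of $P\cdot[E_\theta]$ in $\mathbb P(\frak g)$ is the closure of the adjoint orbit $G\cdot[E_\theta]$, which is nondegenerate in $\mathbb P(\frak g)$ because it is the highest weight orbit of the irreducible $G$-module $\frak g$; projecting along $\frak g\to\frak g/\frak p$ then gives nondegeneracy of $\mathcal C_o$ in $\mathbb P(\frak g/\frak p)$. With these two corrections your outline becomes the paper's proof.
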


   \begin{proof}
   Let $\theta$ be the maximal positive root of $\frak g$. Since $\theta$ has nonzero coefficient in $\alpha_i$ for any $i=1, \dots,\ell$, $S_{\theta}$ is not contained in $  P_{\Delta_1}$ for any subset $\Delta_1 $ of $\Delta$, and thus $C_{\theta}$ is a (nonconstant) rational curve in $G/P$ for any $P=P_{\Delta_1}$. We will show that $C_{\theta}$ is an unbendable rational curve in $G/P$. It suffices to show that $(\beta, \theta^{\vee}) \leq 1$ for all $\beta \in \Phi^+_{\Delta_1} \backslash \{\theta\}$. To do this we will use the following two facts:
   \begin{enumerate}
   \item[(i)] If $G$ is not of type $A$, then there is $i_0 \in \{1, \dots, \ell\}$ such that  $(\alpha_i, \theta)  =0$ for all $i \not =i_0 $ in $\{1, \dots, \ell\}$   and $(\alpha_{i_0}, \theta^{\vee}) = 1$.
   \item[(ii)] If $G$ is of type $A$, then $(\alpha_j, \theta)=0$ for all $j   \in \{2, \dots, \ell-1\}$ and $(\alpha_1, \theta^{\vee}) = (\alpha_{\ell}, \theta^{\vee}) =1$.
   \end{enumerate}
Following the conventions in \cite{OV90} for the indices of simple roots,    we have $i_0=2$ if $G$ is of type $B_{\ell},  D_{\ell}$, $i_0=1$ if $G$ is of type $C_{\ell}$,  and $i_0=6$ ($ 6,1,4,2$, respectively) if $G$ is of type $E_6$ ($E_7, E_8, F_4, G_2$, respectively).   See the affine Dynkin diagram of $\Delta^{(1)}$ in Table 6 of \cite{OV90}, which  is the diagram of the union $\Delta \cup\{-\theta\}$ of  $\Delta$ in Table 1 of \cite{OV90} and  $\{-\theta\}$.  \\

    If $G$ is not of type $A$, then, by (i), the maximal root among roots in $\Phi \backslash \{\theta\}$ is $\theta-\alpha_i$.
  Since $( \theta-\alpha_i, \theta ^{\vee}) = 2-1=1$,  we have $(\beta, \theta^{\vee}) \leq 1$ for all $\beta \in \Phi^+_{\Delta_1} \backslash \{\theta\}$.
  If $G$ is of type $A$, then, by (ii), among roots in $\Phi \backslash \{\theta\}$, maximal ones are either $\theta-\alpha_1$ or $\theta-\alpha_{\ell}$. From $(\theta-\alpha_1, \theta^{\vee})=(\theta-\alpha_{\ell}, \theta^{\vee}) = 1$ it follows that $(\beta, \theta^{\vee}) \leq 1$ for all $\beta \in \Phi^+_{\Delta_1} \backslash \{\theta\}$.

The curve $C_{\theta}$ is tangent to $\mathbb C E_{\theta} \subset T_{o}G/P$. The variety $\mathcal C_{o}$ of tangents at the base point $o$ is the closure of the $P$-orbit  $P.[E_{\theta}]$ in $ \mathbb P(\frak m )$. We claim that $\mathcal C_{o}$ is nondegenerate in $\mathbb P(\frak g/\frak p )$.
Consider the projection $\frak g \rightarrow \frak g/\frak p   $. The closure of the $P$-orbit $P.[E_{\theta}]$ in $\mathbb P(\frak g)$ is the $G$-orbit $G.[E_{\theta}]$, which is nondegenerate in $\mathbb P(\frak g)$ because it is the highest weight orbit. Therefore, the closure of the $P$-orbit $P.[E_{\theta}]$ in $\mathbb P(\frak g/\frak p  )$ is nondegenerate in $\mathbb P(\frak g/\frak p  )$.
\end{proof}

\begin{remark}
 The inequality $(\beta, \theta^{\vee}) \leq 1$ for any $\beta \in \Phi^+\backslash \{ \theta\}$ also follows from Proposition 25 in  VI.1.8   of \cite{Bo07}.
\end{remark}

\begin{remark} \label{rem:unbendable but not minimal} Take a simple root $\alpha_i$ which is not short and consider the rational homogenous variety $G/P$ associated with $\Delta_1=\Delta -\{\alpha_i\}$.  Then the rational curve $C_{\alpha_i}$ is a minimal rational curve and the variety of tangents of the minimal rational component containing $[C_{\alpha_i}]$ at a point $x \in G/P$  spans a proper subspace of $\mathbb P T_x(G/P)$ if the coefficient of $\alpha_i$ in $\theta$ is $> 1$ (Proposition 1 of \cite{HM02}). Therefore, an unbendable rational curve is not necessarily a minimal rational curve.
\end{remark}

Together with Proposition \ref{prop:nondegenerate unbendable curve implies simplicity},  Proposition \ref{pro:unbendable curves in rational homogeneous variety} reproves the following result.
\begin{proposition} [Theorem 2.1 of \cite{AzBi10}] Let $X=G/P$ be a rational homogeneous variety with $G$ being simple.
Then the tangent bundle of $X$ is simple.
\end{proposition}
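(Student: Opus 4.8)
The statement is an immediate consequence of Proposition~\ref{prop:nondegenerate unbendable curve implies simplicity} once its two hypotheses are verified for a suitable family, and Proposition~\ref{pro:unbendable curves in rational homogeneous variety} already produces that family. So the plan is to take $\mathcal H$ to be the irreducible component of the Hilbert scheme of rational curves in $X=G/P$ containing $[C_\theta]$, the curve attached to the maximal positive root $\theta$. By Proposition~\ref{pro:unbendable curves in rational homogeneous variety}, $\mathcal H$ is an unbendable rational component and its variety of tangents $\mathcal C_y$ is nondegenerate in $\mathbb P T_y X$ at \emph{every} point $y\in X$; in particular hypothesis~(1) of Proposition~\ref{prop:nondegenerate unbendable curve implies simplicity} holds (it only asks for general $y$). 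It thus remains to check hypothesis~(2): that a general point of $X$ can be joined to a point of $X$ by a connected chain of curves in $\mathcal H$.

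To check hypothesis~(2) I would argue group-theoretically. Since $G$ is connected and acts on the Hilbert scheme fixing the component through $[C_\theta]$, the family $\mathcal H$ contains every translate $g\cdot C_\theta$; hence $G$ permutes the equivalence classes of the relation ``joined by a connected chain of curves in $\mathcal H$''. Let $\Sigma$ be the class of the base point $o\in G/P$. As $\Sigma$ is constructible (it is the union of the stabilizing increasing sequence of chain-reachable loci), the set $\widehat{\Sigma}=\{g\in G: g\cdot o\in\Sigma\}$ is a constructible subgroup of $G$, hence closed: indeed if $g_1\cdot o,g_2\cdot o\in\Sigma$ then $g_2$ maps $\Sigma$ into $\Sigma$, hence onto $\Sigma$, and likewise $g_1$, so $g_1g_2\cdot o\in\Sigma$. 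Moreover $\Sigma=\widehat{\Sigma}\cdot o$, and $\widehat{\Sigma}$ contains $P$ (which fixes $o$) and contains $S_\theta$ (since $S_\theta\cdot o=C_\theta\subseteq\Sigma$). A closed subgroup of $G$ containing the parabolic $P=P_{\Delta_1}$ is parabolic, say $P_{\Delta_1'}$ with $\Delta_1'\subseteq\Delta_1$; but then $E_\theta\in\mathfrak p_{\Delta_1'}$ forces $n_{\Delta_1'}(\theta)\le 0$, which is impossible unless $\Delta_1'=\emptyset$ because every simple root occurs in $\theta$ with positive coefficient. Hence $\widehat{\Sigma}=G$ and $\Sigma=G\cdot o=X$, which is hypothesis~(2). (When $P$ is chosen so that $X$ has Picard number one one may instead simply invoke the remark following Proposition~\ref{prop:nondegenerate unbendable curve implies simplicity}.)

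With both hypotheses in hand, Proposition~\ref{prop:nondegenerate unbendable curve implies simplicity} gives that $T(G/P)$ is simple. There is no genuinely hard step: the construction of the unbendable family with nondegenerate variety of tangents is already carried out in Proposition~\ref{pro:unbendable curves in rational homogeneous variety}, and the only thing to add is the chain-connectedness in hypothesis~(2). The one mild subtlety is that $G/P$ need not have Picard number one, so one cannot in general quote the automatic verification of~(2); the argument above replaces it, and the point where simplicity of $\mathfrak g$ enters is precisely the fact that $\theta$ is supported on all of $\Delta$, which is what prevents $\widehat{\Sigma}$ from being a proper parabolic.
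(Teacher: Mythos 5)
Your proof is correct and follows essentially the same route as the paper: both reduce to verifying the chain-connectedness hypothesis by showing that the stabilizer of the chain-equivalence class $\Sigma$ of $o$ is a parabolic subgroup properly containing $P$ which must equal $G$ because $\theta$ is supported on every simple root. The only (cosmetic) difference is in the last step, where you use the containment $E_\theta\in\mathfrak p_{\Delta_1'}$ directly, while the paper argues via $(\varpi_j,\theta^\vee)>0$ and the degree of $C_\theta$ against line bundles pulled back from $G/Q$; both hinge on the same fact about $\theta$, and your version is if anything slightly more direct.
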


\begin{proof}
By Proposition \ref{prop:nondegenerate unbendable curve implies simplicity} and Proposition \ref{pro:unbendable curves in rational homogeneous variety}, it suffices to show that a general point $x$ in $G/P$ is joined to the base point $o$ by a connected chain of curves in the unbendable rational component $\mathcal H$ constructed in Proposition \ref{pro:unbendable curves in rational homogeneous variety}.

Let $\Sigma$ denote the subset of $G/P$ consisting of $x \in G/P$ which are joined to $o$ by a connected chain of curves in $\mathcal H$. Then   the stabilizer $Q$ of $\Sigma$ in $G$ contains the subgroup of $G$ generated by $P$ and $S_{\theta}$ because the action of $S_{\theta}$ moves the point $o$ to a point in $C_{\theta}$.  Thus $Q$ contains $P$ properly.

Let $\{\varpi_1, \dots, \varpi_{\ell}\}$ denote the   system of fundamental weights corresponding the system $\Delta$ of simple roots. Then each $\varpi_j$ can be written as a linear combination of $\alpha_i$'s with positive coefficients (See Table 2 of \cite{OV90}). By (i) and (ii) in the proof of Proposition \ref{pro:unbendable curves in rational homogeneous variety}, $(\varpi_j, \theta^{\vee})$ is positive for any $j=1, \dots, \ell$. Therefore, $C_{\theta}$ cannot be contained in a fiber of a nontrivial projection $G/P \rightarrow G/Q$. Since $Q/P$ is not a point, $G/Q$ is a point, that is, $G$ is $Q$ and $\Sigma$ is $G/P$.
\end{proof}

\subsection{Smooth horospherical varieties of Picard number one}
    For $i=1, \dots, \ell$, let $V_{\varpi_i}  $ be the irreducible representation of $G$ of highest weight  $\varpi_i $ and let $v_{\varpi_i}$ be a highest weight vector  of  $V_{\varpi_i}$.
  Let $(G, \varpi_i, \varpi_j)$ denote the closure of the $G$-orbit of the sum $[v_{\varpi_i}+ v_{\varpi_j}]$ in $\mathbb P(V_{\varpi_i} \oplus V_{\varpi_j})$.

\begin{proposition}[\cite{Pa09}] \label{prop:horospherical varieties pasquier}
Let $X$ be a smooth horospherical variety of Picard number one. Then, $X$ is either a rational homogeneous variety    or one of the following:

\begin{enumerate}
\item $(B_n, \varpi_{n-1}, \varpi_n)$ ($n \geq 3$)
\item $(B_3, \varpi_1, \varpi_3)$
\item $(C_n, \varpi_{k }, \varpi_{k-1})$ ($n \geq 2$, $2 \leq k \leq n$)
\item $(F_4, \varpi_3, \varpi_2)$
\item $(G_2, \varpi_2, \varpi_1)$.
\end{enumerate}
In the latter case, the automorphism group $\Aut^0(X)$ is given by $\widetilde{G} \ltimes H^0(G/P, G \times_PV)$, where $\widetilde{G}$ is a reductive group with $G$ a maximal semisimple subgroup,  and  the open $\Aut^0(X)$-orbit $\mathcal O$  in $X$ is $G$-equivariantly isomorphic to      the total space of a homogeneous vector bundle $G\times_PV$ on $G/P$, where $P$ is the maximal parabolic subgroup associated to
$$\varpi_{n-1}, \,\, \varpi_1, \,\,\varpi_{k-1},\,\,\varpi_3,\,\, \varpi_2, \text{ respectively }$$
and $V$ is the simple $P$-module of highest weight $\lambda_V$ given by
$$\varpi_{n-1} - \varpi_{n}, \,\,\varpi_1- \varpi_3, \,\,\varpi_{k }-\varpi_{k-1},\,\, \varpi_3-\varpi_2, \,\,\varpi_2-\varpi_1,   \text{ respectively }.$$
\end{proposition}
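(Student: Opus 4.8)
The plan is to run the Luna--Vust classification of horospherical embeddings, read off what $\rho(X)=1$ together with smoothness forces, and finish with a case check over the Dynkin diagrams. If $G$ acts transitively on $X$ then $X=G/P_{\Delta_1}$ for some $\Delta_1$ and there is nothing to prove, so assume $X$ is not homogeneous. Let $G/H\subseteq X$ be the open $G$-orbit, $\mathbf P:=N_G(H)$ the associated parabolic, $\mathbf P/H$ the associated torus, $M$ the lattice of $B$-eigenweights of $\mathbb C(X)$ (so $M\cong\mathfrak X(\mathbf P/H)$, of rank $n$), and $N=\Hom(M,\mathbb Z)$. Since $G/H$ is horospherical its valuation cone is all of $N_{\mathbb Q}$, so $X$ is described by a complete colored fan in $N_{\mathbb Q}$ whose rays correspond to the $G$-stable prime divisors of $X$ and whose colors correspond to the simple roots not lying in the Levi subalgebra of $\mathbf P$.

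Now I would impose $\rho(X)=1$ and smoothness. As $X$ is smooth it is locally factorial, so $\operatorname{Pic}(X)=\operatorname{Cl}(X)$ and the standard presentation of the class group of a complete spherical variety gives
\[
\rho(X)=\#\{\text{rays}\}+\#\{\text{colors}\}-n.
\]
Combining this equality (with $\rho(X)=1$) with completeness of the fan and with the combinatorial smoothness criterion for horospherical varieties, one shows that the configuration is forced into a very special minimal shape: $n=1$, $\mathbf P$ has corank two, there is no $G$-stable prime divisor, and the fan consists of the two half-lines of $N_{\mathbb Q}=\mathbb Q$, each carrying one of the two colors. Consequently $X$ has exactly three $G$-orbits --- the open one and two closed ones, which are flag varieties $G/P_1$ and $G/P_2$ for two maximal parabolic subgroups with $P_1\cap P_2=\mathbf P$ --- and, writing $\varpi_1,\varpi_2$ for the fundamental weights attached to $P_1,P_2$, one identifies $X$ with $(G,\varpi_1,\varpi_2)\subseteq\mathbb P(V_{\varpi_1}\oplus V_{\varpi_2})$ in the notation of the excerpt. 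Removing one of the two closed orbits leaves, $G$-equivariantly, the total space of a homogeneous vector bundle $G\times_P V$, where $P$ is the maximal parabolic attached to one of the two weights, $G/P$ is the zero section, $\lambda_V$ is the difference of the two fundamental weights after normalizing the $\mathbb G_m$-scaling of the fibers, and the Levi of $P$ acts transitively on $V\setminus\{0\}$.

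It remains to make the smoothness criterion explicit and run the case check. The local structure theorem gives, around the open $B$-orbit of a closed orbit $G/P_i$, a $B$-stable affine chart $R_u(P_i^-)\times Z$ with $Z$ an affine slice carrying a Levi action, and $X$ is smooth along $G/P_i$ iff $Z$ is smooth; since here $N=\mathbb Z$ and each half-line is generated only by the image of its color, this boils down to a short list of numerical conditions on the pair $(\varpi_1,\varpi_2)$ --- primitivity of the color images together with constraints tying the two special simple roots to the highest root $\theta$ and to each other, of the same flavour as facts (i)--(ii) in the proof of Proposition~\ref{pro:unbendable curves in rational homogeneous variety}. Imposing these at both closed orbits and running through all simple $G$ leaves exactly the five families $(B_n,\varpi_{n-1},\varpi_n)$ with $n\geq3$, $(B_3,\varpi_1,\varpi_3)$, $(C_n,\varpi_k,\varpi_{k-1})$ with $2\leq k\leq n$, $(F_4,\varpi_3,\varpi_2)$, $(G_2,\varpi_2,\varpi_1)$, and $P$ and $\lambda_V$ are read off each surviving pair. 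Finally, the automorphism group is obtained from $\operatorname{Lie}\Aut(X)=H^0(X,TX)$ via the orbit stratification: $H^0(G/P,G\times_P V)$ contributes fiberwise translations of the bundle, the $\mathbb G_m$ scaling the $V$-fibers together with $G$ yields the reductive factor $\widetilde G$ in which $G$ is a maximal semisimple subgroup, and a Borel--Weil--Bott vanishing shows nothing else occurs, so $\Aut^0(X)=\widetilde G\ltimes H^0(G/P,G\times_P V)$ and $\mathcal O=G\times_P V$ is its open orbit.

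The main obstacle is the case check together with pinning down the smoothness criterion in usable form: there is no reason visible a priori why only five families survive --- in particular why $F_4$ and $G_2$ appear while $E_6,E_7,E_8$ do not --- so the finiteness must be extracted diagram by diagram, and the technical crux is to phrase the smoothness condition at the ``far'' closed orbit so that it can actually be tested on each node, i.e.\ to get the interaction of the two colors with the roots exactly right.
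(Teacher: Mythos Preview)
The paper does not prove this proposition at all: it is quoted verbatim from Pasquier~\cite{Pa09} and used as a black box (the only commentary is the remark on conventions that follows). So there is no ``paper's own proof'' to compare against; what you have written is a sketch of Pasquier's original argument, not of anything in the present paper.

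That said, your outline is broadly faithful to how the classification in \cite{Pa09} actually proceeds: Luna--Vust data for horospherical embeddings, the Picard-number-one and smoothness constraints forcing rank one with two colors and two closed orbits, identification with the projective model $(G,\varpi_i,\varpi_j)$, and then a Dynkin-by-Dynkin check of the smoothness criterion. The description of $\Aut^0(X)$ and of the open orbit as $G\times_P V$ is also in \cite{Pa09}. If you intend to include a proof here rather than a citation, you should be aware that the ``short list of numerical conditions'' you allude to is the genuine content of Pasquier's Theorem and Lemmas, and your sketch does not actually state them; as written this is a plan, not a proof, and the honest option in the present paper is simply to cite \cite{Pa09} as the author does.
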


\begin{remark}
We remark that  our convention is different from that of \cite{Pa09}.
The way of indexing simple roots and fundamental weights  is the same as in \cite{OV90} in this paper while \cite{Pa09} follows the convention in \cite{Bo07}.
The isotropy subgroup $P$ contains the negative Borel subgroup in this paper while it contains the positive Borel subgroup in \cite{Pa09}.
\end{remark}

 \begin{proposition} \label{pro:unbendable curves in smooth horospherical variety}  Let $ X$ be a  smooth horospherical variety of Picard number one.
 Then there is an unbendable rational component $\mathcal H$ whose variety $\mathcal C_x$ of   tangents at any $x $ in the open $\Aut^0(X)$-orbit $\mathcal O$ is nondegenerate in $\mathbb PT_x X$.
 \end{proposition}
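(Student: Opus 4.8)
The plan is to combine Pasquier's classification with the curve $C_\theta$ used in Proposition~\ref{pro:unbendable curves in rational homogeneous variety}. If $X$ is rational homogeneous, Proposition~\ref{pro:unbendable curves in rational homogeneous variety} already gives the conclusion (with $\mathcal O=X$), so by Proposition~\ref{prop:horospherical varieties pasquier} I may assume $X$ is one of the five listed types. Then $\mathcal O$ is $G$-equivariantly the total space of $E:=G\times_P V$ over $Z:=G/P$, with $P$ the maximal parabolic and $V$ the simple $P$-module of highest weight $\lambda_V$ listed there. The zero section realizes $Z=G/P$ as a $G$-stable subvariety of $\mathcal O\subset X$; I fix its base point $o$, so canonically $T_oX=(\mathfrak g/\mathfrak p)\oplus V$, and $N_{Z/X}=N_{Z/\mathcal O}=E$ since $Z$ is the zero section of a total space.

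Next I would take $C:=C_\theta\subset Z\subset X$ ($\theta$ the highest root); it is nonconstant since $\theta$ has a positive coefficient on the simple root defining $P$, so $S_\theta\not\subset P$. To show $C$ is unbendable in $X$, I use the exact sequence $0\to TZ|_C\to TX|_C\to E|_C\to 0$ on $C\cong\mathbb P^1$. By the computation recalled before Proposition~\ref{pro:unbendable curves in rational homogeneous variety} and its proof, $TZ|_C=\bigoplus_{\beta\in\Phi^+_{\Delta_1}}\mathcal O(\langle\beta,\theta^\vee\rangle)=\mathcal O(2)\oplus\mathcal O(1)^p\oplus\mathcal O^q$. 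For $E|_C=S_\theta\times_{S_\theta\cap P}V$: every weight $\mu$ of $V$ differs from $\lambda_V$ by a nonnegative combination of the simple roots $\{\alpha_j:j\ne i_0\}$ of the Levi factor of $P$, and by fact (i) in the proof of Proposition~\ref{pro:unbendable curves in rational homogeneous variety} (applicable since $G$ is not of type $A$ in the five cases) these roots are orthogonal to $\theta$; hence $\langle\mu,\theta^\vee\rangle=\langle\lambda_V,\theta^\vee\rangle$ for all $\mu$, so $E|_C=\mathcal O(\langle\lambda_V,\theta^\vee\rangle)^{\dim V}$. A direct case-by-case evaluation gives $\langle\lambda_V,\theta^\vee\rangle=1$ for types $(B_n,\varpi_{n-1},\varpi_n)$, $(F_4,\varpi_3,\varpi_2)$, $(G_2,\varpi_2,\varpi_1)$ and $\langle\lambda_V,\theta^\vee\rangle=0$ for types $(C_n,\varpi_k,\varpi_{k-1})$, $(B_3,\varpi_1,\varpi_3)$. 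In every case $E|_C$ is globally generated with all summands of degree $\le 1$, so the extension splits (every twist occurring in the relevant $\mathrm{Ext}^1$ on $\mathbb P^1$ is $\ge -1$) and $TX|_C=\mathcal O(2)\oplus\mathcal O(1)^{p'}\oplus\mathcal O^{q'}$: $C$ is unbendable and free.

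Let $\mathcal H$ be the irreducible Hilbert component containing $[C]$, an unbendable rational component. Since $C$ is free, $\mathcal H$ is a covering family; since $\mathcal H$ is $\Aut^0(X)$-invariant and $\Aut^0(X)$ acts transitively on $\mathcal O$, the swept-out locus contains $\mathcal O$, and $x\mapsto\mathcal C_x$ is $\Aut^0(X)$-equivariant. So it suffices to show $\mathcal C_o$ is nondegenerate in $\mathbb P T_oX=\mathbb P((\mathfrak g/\mathfrak p)\oplus V)$. The $P$-translates of $C$ lie in $Z$ and pass through $o$ (as $P$ fixes $o$), so $\mathcal C_o\supseteq\overline{P\cdot[E_\theta]}$, which is nondegenerate in $\mathbb P(\mathfrak g/\mathfrak p)=\mathbb P T_oZ$ exactly as in the proof of Proposition~\ref{pro:unbendable curves in rational homogeneous variety}; hence $\langle\mathcal C_o\rangle\supseteq\mathfrak g/\mathfrak p$. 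On the other hand $\langle\mathcal C_o\rangle$ is $\mathrm{Stab}_{\Aut^0(X)}(o)$-invariant, and $\mathrm{Stab}(o)$ contains all fibrewise translations by sections $s\in H^0(Z,E)$ with $s(z_0)=0$, $z_0=\pi(o)$, which act on $T_oX$ by the shears $(\xi,v)\mapsto(\xi,v+(ds)_{z_0}(\xi))$; therefore $\langle\mathcal C_o\rangle$ contains $(0,(ds)_{z_0}(\xi))$ for all $\xi\in\mathfrak g/\mathfrak p$ and all such $s$. Since $V$ is $P$-irreducible, $\sum_{s(z_0)=0}\mathrm{Im}((ds)_{z_0})$ is a $P$-submodule of $V$, hence $0$ or $V$, and it is nonzero because $E$ is globally generated and nontrivial on $Z$, so admits a section vanishing at $z_0$ to order exactly one. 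Thus $\langle\mathcal C_o\rangle\supseteq V$ too, so $\langle\mathcal C_o\rangle=T_oX$ and $\mathcal C_o$ is nondegenerate.

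The hard part will be the final step, the appearance of the fibre directions $V$ in $\mathcal C_o$. The representation theory (vanishing of $\langle\alpha_j,\theta\rangle$ for $j\ne i_0$, the splitting on $\mathbb P^1$, the values of $\langle\lambda_V,\theta^\vee\rangle$) is routine. But in the types $(C_n,\varpi_k,\varpi_{k-1})$ and $(B_3,\varpi_1,\varpi_3)$, where $E|_C$ is trivial, the deformations of $C$ fixing $o$ see only the horizontal directions to first order, so one genuinely has to use the extra automorphisms of $X$ — the global sections of $E$ — together with the first-order spannedness of $E$ (equivalently, the absence of a proper $\mathrm{Stab}(o)$-invariant subspace of $T_oX$ strictly between $V$ and $T_oX$). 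Making that argument precise, and checking the needed spannedness of $E$, is the point I would expect to require the most care.
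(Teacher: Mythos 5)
Your overall strategy coincides with the paper's: the same curve $C_\theta$ inside the zero section $Z=G/P\subset\mathcal O$, unbendability via the splitting of $0\to TZ|_{C_\theta}\to TX|_{C_\theta}\to E|_{C_\theta}\to 0$, horizontal directions from the $P$-translates in $Z$, and vertical directions from the translations by sections of $E$ together with the $P$-irreducibility of $V$ (your ``shear'' argument is the paper's statement that the projection of the span of tangents to $V$ is a nonzero $P$-stable subspace; your explicit justification that some section vanishes at $z_0$ to order exactly one is in fact more careful than the paper, which merely asserts nonvanishing). However, there is one concretely wrong step. You claim that the simple roots of the Levi factor of $P$ are $\{\alpha_j:j\neq i_0\}$ and hence orthogonal to $\theta$, so that all weights of $V$ pair equally with $\theta^\vee$ and $E|_{C_\theta}\cong\mathcal O(\langle\lambda_V,\theta^\vee\rangle)^{\dim V}$. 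This is not so: $P$ is the maximal parabolic attached to $\varpi_{n-1},\varpi_1,\varpi_{k-1},\varpi_3,\varpi_2$ respectively, so its Levi is generated by $\Delta$ minus \emph{that} node, which is different from $\alpha_{i_0}$ in four of the five cases; in particular $\alpha_{i_0}$ is a simple root of the Levi and $(\alpha_{i_0},\theta^\vee)=1$. The conclusion genuinely fails for $(B_3,\varpi_1,\varpi_3)$: there $V$ is the $4$-dimensional $P$-module with weights $\tfrac12(\epsilon_1\pm\epsilon_2\pm\epsilon_3)$, whose pairings with $\theta^\vee=\epsilon_1+\epsilon_2$ are $1,1,0,0$, so $E|_{C_\theta}\cong\mathcal O(1)^2\oplus\mathcal O^2$, not $\mathcal O^4$.

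The gap is repairable, because what you actually need is only that every $h_\theta$-weight of $V$ lies in $\{0,1\}$ (equivalently, the degrees $a_i$ of $E|_{C_\theta}$ satisfy $0\le a_i\le 1$, which is exactly what the paper asserts); this holds in each of the five cases, but it must be checked on \emph{all} weights of $V$, not deduced from $\lambda_V$ alone --- for instance in types $(B_n,\varpi_{n-1},\varpi_n)$ and $(F_4,\varpi_3,\varpi_2)$ it holds because $V$ happens to be trivial as a module for the $\mathfrak{sl}_2$ attached to $\alpha_{i_0}$, not because $\alpha_{i_0}$ is absent from the Levi. Your closing diagnosis of which cases ``genuinely need'' the extra automorphisms is also skewed by this miscomputation: in $(B_3,\varpi_1,\varpi_3)$ the summand $\mathcal O(1)^2$ of $E|_{C_\theta}$ does produce first-order deformations of $C_\theta$ fixing $o$ with nonzero vertical component, so only the $C_n$ series truly has $E|_{C_\theta}$ trivial. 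With the weight computation corrected case by case, the rest of your argument goes through and matches the paper's proof.
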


\begin{proof}
We will show that there is an unbendable rational component $\mathcal H$ such that  for any point $x $ in the open $\Aut^0(X)$-orbit $\mathcal O$, the variety of tangents of $\mathcal H$ at $x$ is nondegenerate in $\mathbb P T_xX$.  Since $G \times H^0(G/P, G \times_PV)$ acts on $\mathcal O = G\times_PV$ transitively, we may assume that $x=[o,0]$, where $o$ is the base point in $G/P$ and $0$ is the zero element  in $V$. Identify $G/P$ with the zero section $Y$ of $G \times_PV$ and let $C_{\theta}$ be the rational curve in $G/P \simeq Y$ constructed in Proposition \ref{pro:unbendable curves in rational homogeneous variety}.  We claim that  $C_{\theta}$ is   an unbendable curve in $\mathcal O$.

Let $\lambda_V$ be the highest weight  of $V$ listed in Proposition \ref{prop:horospherical varieties pasquier}.
By the facts (i) and (ii) in the proof of Proposition \ref{pro:unbendable curves in rational homogeneous variety}, $( \lambda_V, \theta^{\vee}) $ is the coefficient of $\alpha_{i_0}$ in the expression of  $\lambda_V$ via simple roots, $\alpha_1, \dots, \alpha_{\ell}$.
For the description of $\varpi_i$ as a linear combination of simple roots, see   Table 2 in \cite{OV90}: The $i$-th column of the matrix $(A^t)^{-1}$ inverse to the transposed Cartan matrix $A$ is the list of coefficients of simple roots in $\varpi_i$. Let $ b_{i,j} $ denote the $(i,j)$-th element  of the matrix $(A^t)^{-1}$.
  Then   $( \lambda_V, \theta^{\vee})$ is given by
  $$ b_{2, n-1} - b_{2,n}, \quad   b_{2,1} -b_{2,3}, \quad b_{1,k} - b_{1,k-1}, \quad b_{4,3} -b_{4,2}, \quad b_{2,2}- b_{2,1}, \quad \text{respectively}. $$
  Using the description of the matrix $(A^t)^{-1}=(b_{i,j})$  in Table 2 of \cite{OV90} we get that  $( \lambda_V, \theta^{\vee})$ is given by
  $$\frac{1}{2}(4-2), \quad \frac{1}{2}(2-2) ,\quad \frac{1}{2}(4-4),\quad 3-2, \quad2-1,  \quad \text{respectively}.   $$
 %
%
%
 %
Therefore, $( \lambda_V, \theta^{\vee}) $ is $1$ or $0$, and thus
the vector bundle $G \times_P V$ restricted to $C_{\theta} \subset G/P$ splits as $\oplus_{i=1}^m\mathcal O(a_i)$, where $$1 \geq a_1 \geq \cdots \geq a_m \geq 0. $$
  From this property and  the nonnegativity of $TY|_{C_{\theta}}$, it follows that the short exact sequence $0 \rightarrow TY \rightarrow TX|_Y \rightarrow G \times_P V \rightarrow 0$  restricted to $C_{\theta}$ splits. Since $TY|_{C_{\theta}}$   has only one $\mathcal O(2)$-factor by Proposition \ref{pro:unbendable curves in rational homogeneous  variety},  we get that   $TX|_{C_{\theta}}$ is also nonnegative and has only one $\mathcal O(2)$-factor.

Now
let $\mathcal V$ be the vector space  spanned by $$\{ T_{[o,0]}(a C_{\theta}): [o,0] \in a C_{\theta}, a \in G \ltimes H^0(G/P, G \times_PV) \}.$$
Then the projection of $\mathcal V$ to $V$ is a   nonzero $P$-stable subspace of $V$,   and thus is the whole $V$ because $V$ is an irreducible $P$-module. Hence  the tangent directions to translates of $C_{\theta}$ passing through $[o,0]$ span  $T_{[o,0]}\mathcal O$.
\end{proof}

\begin{proof} [Proof of Theorem \ref{thm:simplicity of horospherical}]
 By Proposition \ref{pro:unbendable curves in smooth horospherical variety}, there is an unbendable rational component $\mathcal H$ whose variety of tangents is nondegenerate in $\mathbb P(T_xX)$ for any $x \in \mathcal O$.
  Since $X$ has Picard number one, 
  by Proposition \ref{prop:nondegenerate unbendable curve implies simplicity},
  any endomorphism of the tangent bundle of $X$ is a scalar multiple.
\end{proof}

\end{document}